\numberwithin{equation}{section}
\newtheorem{condition}[proposition]{Condition}
\numberwithin{equation}{section}
\begin{document}
\title{EA-Matrix integrals of associative algebras and equivariant localization\protect\thanks{ Preprint CNRS HAL-00507788 (2010) }. }

\author{Serguei Barannikov}
\date{}

\institute{IMJ-UMR7586 CNRS,  l'Universit\'e de Paris; NRU-HSE \\ \email{serguei.barannikov@imj-prg.fr}}

\dedication{To R.K.Gordin on the occasion of his 70th birthday} 
\maketitle
\keywords{Mirror symmetry, Gromov-Witten invariants, Noncommutative varieties, Batalin-Vilkovisky formalism}
\begin{abstract}
The EA-matrix integrals, introduced in \cite{B06},
are studied in the case of graded associative algebras with odd or
even scalar product. I prove that the EA-matrix integrals for associative algebras
with scalar product are integrals of equivariantly closed differential forms with respect to Lie algebra $gl_{N}(A)$.
\end{abstract}
\section*{Introduction}

The theory of periods of noncommutative varieties, depending on commutative parameters,  was introduced in \cite{B00}. The analogue of top-degree holomorphic form in this setting was shown in \textit{loc.cit.} to be certain element of semi-infinite subspace of negative cyclic homology. The integrals of this element satisfy the second order equation with respect to the parameters of deformations of the varieties. It was proven in \textit{loc.cit.} that the generating function of genus zero Gromov-Witten invariants of complete intersection in $\mathbb{CP}^d$  with trivial canonical class coincides with the coefficient of this  second order equation for the family of mirror varieties. This approach had singled out the $A_{\infty}-$ algebras/categories, satisfying cyclic homology analogue of degeneration of Hodge to de Rham spectral sequence, as the proper definition of (smooth and compact) noncommutative varieties. 

The EA-matrix integrals were introduced in  \cite{B06} as a set of
periods of associative, more generally $A_{\infty}-$algebras or
noncommutative varieties, depending on \textit{noncommutative} parameters:
\[
\mathcal{F}(Y)=\int_{\Gamma}\exp(Tr\langle Y,X\rangle+\frac{1}{3!}m_{\tilde{A}}(X,\frac{\partial}{\partial  X}))\vdash\prod_{\alpha,i,j}dX_{i}^{\alpha,j}
\]
 $\tilde{A}=A\otimes q_{N} $/  $\tilde{A}=A\otimes gl_{N} $  in even/odd  scalar product case, here $q_N$ is the odd matrix algebra, see \textit{loc.cit.}
It was shown in theorem 3 in \textit{loc.cit.} that
the matrix Airy integral from \cite{K}   corresponds in this way to the simplest associative
algebra of one dimension $A=\{e|e^2=e\}$. 

The usual varieties correspond here to $A_{\infty}-$algebras of endomorphisms of generators of their $D^b(\textrm{Coh})-$categories.

The asymptotic expansion of EA-matrix integrals via BV formalism
was shown in  \cite{B06,B18a} to define,  as a sum over generalized ribbon
graphs, a generating function for series of cohomology classes of compactified moduli spaces
of curves of all genus. A particular example is the formula  
for cohomology-valued generating function for products of $\psi$-classes,
$\psi_{i}=c_{1}(T_{p_{i}}^{*})$, in the cohomology $H^{*}(\bar{\mathcal{M}}_{g,n})$ calculated by the stable ribbon graph complex (\cite{B18a}):
\begin{equation}
\sum_{\sum d_{i}=d}\psi_{1}^{d_{1}}\ldots\psi_{n}^{d_{n}}\prod_{i=1}^{n}\frac{(2d_{i}-1)!!}{\lambda_{i}^{(2d_{i}+1)}}= \left[ \sum_{G\in\Gamma_{g,n}^{dec,odd}} G \,\, \frac{2^{-\chi(G)}}{\left|\textrm{Aut}(G)\right|}\prod_{e\in\textrm{Edge}(G)}\frac{1}{\lambda_{i(e)}+\lambda_{j(e)}}  \right]  \label{eq:psiprod}
\end{equation}
where the sum on the right is over \emph{stable ribbon} oriented graphs of
genus $g$ with $n$ numbered punctures, with $2d+n$ edges, and such
that at each vertex the cyclically ordered subsets of edges have arbitrary \emph{odd}
cardinality. 

In this paper it is proven that the EA-matrix integrals for associative algebras
with scalar product are integrals of equivariantly closed differential forms
with respect to the Lie algebra $gl_{N}(A)$. This generalizes and clarifies the similar result with respect to the Lie algebra $gl_{N}$ from \cite{B10}. The localization
formula for the $gl_{N}(A)-$action then leads in \cite{B18} to calculation of
these EA matrix integrals via determinants and $\tau-$functions of
integrable hierarchies. 

The paper is dedicated to Raphail Kalmanovich Gordin, with gratitude.

\subsection*{Acknowledgements}
The results of this paper were presented starting from 2010 at conferences in Miami, Vienna, Tokyo, Moscow, Bonn, Boston. I am thankful to organizers of these conferences for their hospitality and for the opportunity to present these results to large audience, and to participants for interesting questions.

\subsection*{Notations}
For a $\mathbb{Z}/2\mathbb{Z}$-graded vector space $A=A_{0}\oplus A_{1}$
denote via $\Pi A$ the parity inverted vector space, $\left(\Pi A\right)_{0}=A_{1}$,
$\left(\Pi A\right)_{1}=A_{0}$. For an element $a$ from $\mathbb{Z}/2\mathbb{Z}$-graded
vector space $A$ denote by$\pi a$$\in\Pi A$ the same element considered
with parity reversed. 

\section{Equivariantly closed De Rham differential form.}
Let $A=A_{0}\oplus A_{1}$ denotes a $\mathbb{Z}/2\mathbb{Z}$-graded
associative algebra, $\dim_{k}A_{0}=r<\infty$, $char(k)=0$, with
multiplication denoted by $m_{2}:A^{\otimes2}\rightarrow A$. Let
$A$ be endowed with \emph{odd} invariant non-degenerate scalar product $\langle\cdot,\cdot\rangle:A_{0}\otimes A_{1}\rightarrow k$.
The multiplication tensor can be viewed then as the $\mathbb{Z}/3\mathbb{Z}$
- cyclically invariant linear function on $(\Pi A)^{\otimes3}$ 
\[
m_{A}:(\pi a_{1},\pi a_{2},\pi a_{3})\mapsto (-1)^{\bar{a} _{2}+1}\langle m_{2}(a_{1},a_{2}),a_{3}\rangle,~m_{A}\in(\ensuremath{\operatorname*{Hom}}(\left(\Pi A\right)^{\otimes3},k))^{\mathbb{Z}/3\mathbb{Z}}\text{.}
\]
The odd symmetric scalar product on $A$ corresponds to the odd anti-symmetric
product $\langle\cdot,\cdot\rangle^{\pi}$ on $\Pi A$: 
\[
\langle\pi a_{1},\pi a_{2}\rangle^{\pi}=(-1)^{\bar{a}_{1}+1}\langle a_{1},a_{2}\rangle
\]

The tensor product with the matrix algebra $gl_{N}$ is again naturally
a $\mathbb{Z}/2\mathbb{Z}-$ graded associative algebra with the odd
scalar product $A\otimes gl_{N}$. The cyclic tensor 
\begin{equation}
m_{A\otimes gl_{N}}\in(\ensuremath{\operatorname*{Hom}}(\left(\Pi A\otimes gl_{N}\right)^{\otimes3},k))^{\mathbb{Z}/3\mathbb{Z}}
\end{equation}
restricted to the diagonal $\Pi A\otimes gl_{N}\subset\left(\Pi A\otimes gl_{N}\right)^{\otimes3}$
is $GL(N)$-invariant cubic polynomial, denoted by $m_{A\otimes gl_{N}}\left(Z\right)$,
$Z\in\Pi A\otimes gl_{N}$. The associativity of the algebra $A$
translates into the equation 
\begin{equation}
\{m_{A\otimes gl_{N}}\left(Z\right),m_{A\otimes gl_{N}}\left(Z\right)\}=0,\label{mm}
\end{equation}
where $\{\cdot,\cdot\}$ is the odd Poisson bracket corresponding
to the odd anti-symmetric product $Tr\vert_{gl_{N}^{\otimes2}}\otimes\langle\cdot,\cdot\rangle^{\pi}$
on $\Pi A\otimes gl_{N}$.
\begin{proposition} 
The algebra of functions on $\Pi A\otimes gl_{N}$ is identified naturally,
preserving the odd Poisson bracket, with the algebra of polyvectors
on the even affine space $\Pi A_{1}\otimes gl_{N}$. $\square$  \end{proposition} 
This is analogous to algebra of functions on symplectic space being identified naturally, preserving Poisson bracket, with  algebra of functions on cotangent bundle of given lagrangian subspace.

Denote by $X^{\alpha}\in gl_{N}$,
$P_{\alpha}\in\Pi gl_{N}$ the matrices of coordinates on $\Pi A\otimes gl_{N}$
corresponding to a choice of a dual pair of bases $\{e^{\alpha}\}$,
$\{\xi_{\alpha}\}$ on $A_{0}$ and $A_{1}$ so that 
\begin{equation}
Z=\sum_{\alpha}\pi\xi_{\alpha}\otimes X^{\alpha}+\pi e^{\alpha}\otimes P_{\alpha}
\end{equation}
Then $(P_{\alpha})_{j}^{i}$ corresponds to the vector field $\frac{\partial}{\partial(X^{\alpha})_{i}^{j}}$
on $\Pi A_{1}\otimes gl_{N}$. The cubic polynomial $\frac{1}{3!} m_{A\otimes gl_{N}}\left(Z\right)$
corresponds to the sum of the function and the bivector, 
\begin{equation}
\frac{1}{3!}\sum_{\alpha,\beta,\gamma}(m_{A})_{\alpha\beta\gamma}Tr(X^{\alpha}X^{\beta}X^{\gamma})+\frac{1}{2}\sum_{\alpha,\beta,\gamma}(m_{A})_{\alpha}^{\beta\gamma}Tr(X^{\alpha}P_{\beta}P_{\gamma}).\label{sum}
\end{equation}

The odd Poisson bracket is generated by the odd second order Batalin-Vilkovisky
differential $\Delta$ acting on the algebra of functions on $\Pi A\otimes gl_{N}$
\begin{equation}
\{f_{1},f_{2}\}=(-1)^{\bar{f_{1}}}(\Delta(f_{1}f_{2})-\Delta(f_{1})f_{2}+(-1)^{\bar{f_{1}}}f_{1}\Delta(f_{2}))\label{bvident}
\end{equation}
\[
\Delta=\sum_{\alpha,i,j}\frac{\partial^{2}}{\partial X_{i}^{\alpha,j}\partial P_{\alpha,j}^{i}}
\]

\subsection{Divergence-free condition }

Let us assume from now on that the Lie algebra $A_{0}$ is unimodular.

\begin{condition} (unimodularity of $A_{0}$) For any $a\in A_{0}$
\begin{equation}
tr([a,\cdot]\vert_{A_{0}})=0\label{unim}
\end{equation}
\end{condition}

\begin{proposition} The unimodularity of $A_{0}$ (\ref{unim}) implies
\begin{equation}
\Delta m_{A\otimes gl_{N}}(Z)=0.\label{deltamZ}
\end{equation}
\end{proposition}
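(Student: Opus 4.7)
We plan to verify $\Delta m_{A\otimes gl_N}(Z)=0$ by direct computation on the decomposition (\ref{sum}), reducing the vanishing to the unimodularity hypothesis (\ref{unim}).

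The operator $\Delta$ contains a factor $\partial/\partial P$, so it annihilates the purely cubic summand $\frac{1}{3!}(m_A)_{\alpha\beta\gamma}\mathrm{Tr}(X^\alpha X^\beta X^\gamma)$, which involves no $P$-variables. It therefore remains to evaluate $\Delta$ on the bivector summand $\frac{1}{2}(m_A)_\alpha^{\beta\gamma}\mathrm{Tr}(X^\alpha P_\beta P_\gamma)$. The first step is to compute $\Delta\,\mathrm{Tr}(X^\alpha P_\beta P_\gamma)$ by observing that the factor $\partial/\partial X^\mu$ in $\Delta$ must pair with the unique $X$-factor, fixing $\mu=\alpha$, while the matching $\partial/\partial P_\alpha$ may hit either $P_\beta$ or $P_\gamma$. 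The two contributions give, schematically,
\begin{equation*}
\Delta\,\mathrm{Tr}(X^\alpha P_\beta P_\gamma)=N\bigl(\delta^\alpha_\beta\,\mathrm{tr}(P_\gamma)-\delta^\alpha_\gamma\,\mathrm{tr}(P_\beta)\bigr),
\end{equation*}
the overall factor $N$ coming from a free matrix index contracted with the identity, and the relative sign coming from moving the odd derivative past the odd matrix $P_\beta$.

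The second step is to contract this expression with the structure coefficients $(m_A)_\alpha^{\beta\gamma}$. This produces sums of the shape $\sum_\alpha (m_A)_\alpha^{\alpha\gamma}\,\mathrm{tr}(P_\gamma)$ and $\sum_\alpha (m_A)_\alpha^{\gamma\alpha}\,\mathrm{tr}(P_\gamma)$. Unpacking the definition of $m_A$ in terms of $\langle m_2(\cdot,\cdot),\cdot\rangle$, using the invariance of the odd scalar product and the duality $\langle e^\alpha,\xi_\beta\rangle=\delta^\alpha_\beta$, we recognize $\sum_\alpha(m_A)_\alpha^{\alpha\gamma}$ as (up to sign) $\mathrm{tr}(R_{e^\gamma}|_{A_0})$ and $\sum_\alpha(m_A)_\alpha^{\gamma\alpha}$ as (up to sign) $\mathrm{tr}(L_{e^\gamma}|_{A_0})$, where $L_{e^\gamma}$ and $R_{e^\gamma}$ denote left and right multiplication by $e^\gamma$ on $A_0$. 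Their difference equals $\mathrm{tr}([e^\gamma,\cdot]|_{A_0})$, which vanishes by (\ref{unim}), so the total collapses to zero.

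The principal obstacle is the careful bookkeeping of Grassmann signs---arising from the odd parity of $P_\alpha$, the $\pi$-shift on $A$, the odd scalar product, and the cyclic $\mathbb{Z}/3\mathbb{Z}$-symmetry of $m_A$---so that the two terms produced by $\Delta$ enter with the relative sign that identifies their combination with the trace of the adjoint action $[e^\gamma,\cdot]|_{A_0}$ rather than with its sum, which would not vanish in general. Once the signs are tracked, unimodularity is precisely the required input to conclude $\Delta m_{A\otimes gl_N}(Z)=0$.
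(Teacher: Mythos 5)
The paper states this proposition without any proof, so there is no argument of record to compare against; your computation is correct and is evidently the intended verification. You correctly isolate the only nontrivial contribution --- $\Delta$ applied to the bivector summand $\tfrac{1}{2}\sum(m_A)_\alpha^{\beta\gamma}\,Tr(X^\alpha P_\beta P_\gamma)$ --- and your formula $\Delta\,Tr(X^\alpha P_\beta P_\gamma)=N\bigl(\delta^\alpha_\beta\,tr(P_\gamma)-\delta^\alpha_\gamma\,tr(P_\beta)\bigr)$ checks out, the relative minus sign coming, as you say, from anticommuting the odd derivative $\partial/\partial P$ past the odd matrix $P_\beta$. The identification of the two contractions with traces of right and left multiplication also works: writing $(m_A)_\alpha^{\beta\gamma}=-\langle m_2(\xi_\alpha,e^\beta),e^\gamma\rangle$ and using invariance, $\sum_\alpha\langle\xi_\alpha,e^\alpha e^\gamma\rangle=s\,tr(R_{e^\gamma}|_{A_0})$ and $\sum_\alpha\langle\xi_\alpha,e^\gamma e^\alpha\rangle=s\,tr(L_{e^\gamma}|_{A_0})$ with the \emph{same} prefactor $s$ determined by $\langle\xi_\alpha,e^\beta\rangle=s\,\delta_\alpha^\beta$, so the combination produced by $\Delta$ is proportional to $tr\bigl([e^\gamma,\cdot\,]|_{A_0}\bigr)$ and vanishes by (\ref{unim}). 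The one point you leave as an assertion --- that the Grassmann signs conspire to give the commutator rather than the anticommutator --- is exactly the check just indicated, and it is routine once set up as you have it; the argument is complete in structure and supplies the computation the paper omits.
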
$\square$ 

Next proposition is the standard corollary of the equations (\ref{mm}),(\ref{deltamZ})
and the relation (\ref{bvident}).

\begin{proposition} \label{deltatr} The exponent of the sum (\ref{sum})
is closed under the Batalin-Vilkovisky differential 
\[
\Delta\exp\left(\frac{1}{3!}m_{A\otimes gl_{N}}\left(Z\right)\right)=0.
\]
$\square$ \end{proposition}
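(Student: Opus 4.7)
The plan is to expand $\exp(m_{A\otimes gl_{N}}(Z))$ as a power series and show, by induction on $n$, that $\Delta(m^{n})=0$ for every $n\geq 0$, where I abbreviate $m := m_{A\otimes gl_{N}}(Z)$; since $\Delta$ is linear, the claim then follows termwise. A preliminary observation is that $m$ is an \emph{even} function on $\Pi A\otimes gl_{N}$: in the expression (\ref{sum}) the first summand contains three even coordinates $X^{\alpha}$, and the second contains one even $X^{\alpha}$ together with two odd $P_{\beta},P_{\gamma}$, so each summand has even total parity. This parity matters because it fixes the signs in (\ref{bvident}).

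The base cases are immediate: $\Delta(1)=0$ by definition, and $\Delta(m)=0$ is exactly (\ref{deltamZ}). For the inductive step, apply the BV identity (\ref{bvident}) to $f_{1}=m$ and $f_{2}=m^{n-1}$; since $\bar m=0$, solving for $\Delta(m^{n})$ gives
$$\Delta(m^{n})\;=\;\{m,m^{n-1}\}\;+\;\Delta(m)\,m^{n-1}\;-\;m\,\Delta(m^{n-1}).$$
The second term vanishes by (\ref{deltamZ}) and the third by the inductive hypothesis, so the point is to check that $\{m,m^{n-1}\}=0$. This is a consequence of the graded Leibniz rule for the odd Poisson bracket: since $m$ is even, iterating
$$\{m,fg\}\;=\;\{m,f\}\,g\;+\;(-1)^{\bar f}\,f\,\{m,g\}$$
reduces $\{m,m^{n-1}\}$ to a sum of expressions each containing a factor $\{m,m\}$, and the latter is zero by the associativity equation (\ref{mm}). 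Thus $\Delta(m^{n})=0$ for every $n$, and summing $\Delta(m^{n})/n!$ yields $\Delta\exp(m)=0$.

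The argument is genuinely routine once one recognizes that the three ingredients (\ref{mm}), (\ref{deltamZ}) and (\ref{bvident}) are exactly what the inductive step consumes; this is why the proposition is described as a standard corollary. The only point that demands any attention is sign bookkeeping, namely verifying that $m$ is even so that the BV identity simplifies as above and that the odd bracket $\{m,\cdot\}$ is a graded derivation with the signs used. Equivalently, one may package the whole computation into the familiar quantum master equation identity $\Delta e^{m}=\bigl(\Delta m+\tfrac{1}{2}\{m,m\}\bigr)e^{m}$, valid for any even $m$ and itself derivable from (\ref{bvident}) by the same induction; both hypotheses on $m$ are then visibly necessary and sufficient for $\Delta e^{m}=0$.
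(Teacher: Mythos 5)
Your argument is correct and is precisely the "standard corollary" the paper alludes to without writing out: the paper's justification consists of citing (\ref{mm}), (\ref{deltamZ}) and (\ref{bvident}), and your induction on the powers of $m$, using the BV identity to isolate $\{m,m^{n-1}\}$ and the Leibniz rule to reduce it to $\{m,m\}=0$, is the standard way those three ingredients combine. The parity check that $m$ is even is the right point to single out, and your closing reformulation via $\Delta e^{m}=\bigl(\Delta m+\tfrac{1}{2}\{m,m\}\bigr)e^{m}$ is the same computation in packaged form.
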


\subsection{Closed De Rham differential form }

The affine space $\Pi A_{1}\otimes gl_{N}$ has a holomorphic volume
element, defined canonically up to a multiplication by a constant
\[
\varpi=\lambda\prod_{\alpha,i,j}dX_{i}^{\alpha,j}\text{.}
\]
It identifies the polyvectorfields on $\Pi A_{1}\otimes gl_{N}$ with the
de Rham differential forms $\Omega_{\Pi A_{1}\otimes gl_{N}}$ on
the same affine space via 
\[
\gamma\mapsto\gamma\vdash\varpi
\]
The Batalin-Vilkovisky differential $\Delta$ corresponds then to
the De Rham differential $d_{DR}$ acting on the differential forms.
By the proposition \ref{deltatr} the polyvector $\exp\frac{1}{3!}\left(m_{A\otimes gl_{N}}\left(Z\right)\right)$
defines the closed differential form 
\begin{equation}
\Psi(X)=\exp\left(\frac{1}{3!}\sum_{\alpha,\beta,\gamma}(m_{A})_{\alpha\beta\gamma}Tr(X^{\alpha}X^{\beta}X^{\gamma})+\frac{1}{2}\sum_{\alpha,\beta,\gamma}(m_{A})_{\alpha}^{\beta\gamma}Tr(X^{\alpha}\frac{\partial}{\partial X^{\beta}}\wedge\frac{\partial}{\partial X^{\gamma}})\right)\vdash\lambda\prod_{\alpha,i,j}dX_{i}^{\alpha,j}\label{psix}
\end{equation}
\[
d_{DR}\Psi(X)=0
\]
It is a sum of the closed differential forms of degrees $rN^{2}$
, $rN^{2}-2$ ,....

\subsection{Equivariantly closed differential form}

The unimodularity (\ref{unim}) implies the invariance of $\varpi$
under the co-adjoint action of the Lie algebra $A_{0}\otimes gl_{N}$
\[
X\mapsto [Y,X],
\]
$Y\in A_{0}\otimes gl_{N}$.
Consider the $A_{0}\otimes gl_{N}$ -equivariant differential forms
on $\Pi A_{1}\otimes gl_{N}$ : 
\[
\Omega_{\Pi A_{1}\otimes gl_{N}}^{A_{0}\otimes gl_{N}}=(\Omega_{\Pi A_{1}\otimes gl_{N}}\otimes\mathcal{O}_{A_{0}\otimes gl_{N}})^{A_{0}\otimes gl_{N}}\text{.}
\]
The $A_{0}\otimes gl_{N}$-equivariant differential is given by 
\[
d_{A_{0}\otimes gl_{N}}\Phi(Y)=d_{DR}\Phi-\sum_{\alpha,l,j}Y_{a,j}^{l}(i_{[E_{l}^{j}\otimes e^{a},\cdot]}\Phi)
\]
$\Phi\in\Omega_{\Pi A_{1}\otimes gl_{N}}^{A_{0}\otimes gl_{N}}$,
where $i_{\gamma}$ denotes the contraction operator with respect
to the vector field $\gamma$, see e.g. \cite{BV82}. This differential corresponds, when
passing to functions on $\Pi A\otimes gl_{N}$, to the sum 
\begin{eqnarray*}
\Delta_{A_{0}\otimes gl_{N}} & : & f(Z,Y)\mapsto \Delta f-\frac{1}{2}Tr\langle[Y,Z],Z\rangle^{\pi}f\text{},\text{}\\
f(Z,Y) & \in & (\mathcal{O}_{\Pi A\otimes gl_{N}}\otimes\mathcal{O}_{A_{0}\otimes gl_{N}})^{A_{0}\otimes gl_{N}}
\end{eqnarray*}
of the Batalin-Vilkovisky differential and the operator of multiplication
by the odd quadratic function 
\begin{equation}
\frac{1}{2}Tr\langle[Y,Z],Z\rangle^{\pi}=m_{A\otimes gl_{N}}(Y\otimes Z\otimes Z).
\end{equation}
The function depends on the equivariant parameters $Y\in A_{0}\otimes gl_{N}$.
\begin{theorem}
The product of the closed de Rham differential form $\Psi(X)$ (\ref{psix})
with the function $\exp Tr\langle Y,X\rangle$, $Y\in A_{0}\otimes gl_{N}$, $X\in\Pi A_{1}\otimes gl_{N}$, is $A_{0}\otimes gl_{N}$-equivariantly
closed differential form: 
\[
d_{A_{0}\otimes gl_{N}}(\exp(Tr\langle Y,X\rangle+\frac{1}{3!}m_{A\otimes gl_{N}}(X,\frac{\partial}{ \partial X}))\vdash\lambda\prod_{\alpha,i,j}dX_{i}^{\alpha,j})=0
\]
\end{theorem}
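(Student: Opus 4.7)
The plan is to transfer the statement to the BV picture on functions on $\Pi A\otimes gl_N$ via the isomorphism $\gamma\mapsto\gamma\vdash\varpi$ already used in Proposition \ref{deltatr}: under it $d_{DR}$ becomes the second-order operator $\Delta$, and contraction with the vector field $X\mapsto[Y,X]$ becomes multiplication by its Hamiltonian $\tfrac12 Tr\langle[Y,Z],Z\rangle^\pi=m_{A\otimes gl_N}(Y\otimes Z\otimes Z)$, which is precisely the correction built into $\Delta_{A_0\otimes gl_N}$. Hence the theorem is equivalent to the BV identity
\begin{equation*}
\Delta_{A_0\otimes gl_N}\exp\bigl(Tr\langle Y,X\rangle+m_{A\otimes gl_N}(Z)\bigr)=0.
\end{equation*}

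Write $f=g+S$ with $g=Tr\langle Y,X\rangle$ and $S=m_{A\otimes gl_N}(Z)$; both are odd. The derivation property (\ref{bvident}) yields the standard identity $\Delta\exp(f)=\bigl(\Delta f+\tfrac12\{f,f\}\bigr)\exp(f)$, so by the definition of $\Delta_{A_0\otimes gl_N}$ it is enough to show
\begin{equation*}
\Delta f+\tfrac12\{f,f\}=\tfrac12 Tr\langle[Y,Z],Z\rangle^\pi.
\end{equation*}
Two reductions handle most of the left-hand side. First, Proposition \ref{deltatr} (equivalently, equations (\ref{mm}) and (\ref{deltamZ})) gives $\Delta S+\tfrac12\{S,S\}=0$. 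Second, since $g$ is linear in the $X$-coordinates and entirely independent of the $P$-coordinates, while $\Delta$ and the odd Poisson bracket each differentiate once in $X$ and once in $P$, one has $\Delta g=0$ and $\{g,g\}=0$. The whole problem collapses to the single identity
\begin{equation*}
\{g,S\}=\tfrac12 Tr\langle[Y,Z],Z\rangle^\pi.
\end{equation*}

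To verify it, note that $g$ is the Hamiltonian of the constant-coefficient translation of $Z$ by $\pi Y\in\Pi A_0\otimes gl_N$: indeed $\{g,X^\alpha\}=0$, while $\{g,P_\alpha\}$ returns the matrix coefficient of $Y$ at $e^\alpha$, so $\{g,Z\}=\pi Y$. Applying the Leibniz rule to the cubic $S=m_{A\otimes gl_N}(Z)$ and using the $\mathbb{Z}/3\mathbb{Z}$-cyclic symmetry of $m_{A\otimes gl_N}$ collapses the three resulting terms into one proportional to $m_{A\otimes gl_N}(\pi Y\otimes Z\otimes Z)$, which by the very definition of $m_A$ through $m_2$ and the odd antisymmetric pairing $\langle\cdot,\cdot\rangle^\pi$ is $\tfrac12 Tr\langle[Y,Z],Z\rangle^\pi$.

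The main obstacle is the sign and numerical-factor bookkeeping in this last step: Koszul signs from the parity shift $\pi$, from the odd Poisson bracket, and from the cyclic symmetry of $m_A$, together with the combinatorial $\tfrac{1}{3!}$ produced by differentiating the cubic $S$, must combine to reproduce exactly the $\tfrac12$ appearing in the moment $\tfrac12 Tr\langle[Y,Z],Z\rangle^\pi$. Once this is tracked, the three reductions above assemble into the stated equivariant closedness.
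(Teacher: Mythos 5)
Your argument is correct, and once the polyvector--form dictionary set up before Proposition \ref{deltatr} is invoked it is essentially the paper's proof read on the dual (function) side, so the comparison is worth spelling out. The paper stays entirely on the differential-form side: it introduces $R_{Tr(YdX)}=[d_{DR},i_{Tr\langle Y,X\rangle}]$ and the contraction $i_{m(X\frac{\partial}{\partial X}\frac{\partial}{\partial X})}$, proves the Cartan-type commutator $[i_{m(X\frac{\partial}{\partial X}\frac{\partial}{\partial X})},R_{Tr(YdX)}]=i_{[\cdot,Y]}$ as an instance of $[i_{\gamma_1},Lie_{\gamma_2}]=i_{[\gamma_1,\gamma_2]}$, and then conjugates $d_{DR}$ past $e^{Tr\langle Y,X\rangle}$ and $\exp(i_{m(\ldots)})$, finishing with $d_{DR}\Psi(X)=0$ and $R_{Tr(YdX)}\prod_{\alpha,i,j}dX_i^{\alpha,j}=0$. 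Your reduction to $\Delta f+\tfrac12\{f,f\}=\tfrac12 Tr\langle[Y,Z],Z\rangle^{\pi}$ and then to the single bracket $\{g,S\}=\tfrac12 Tr\langle[Y,Z],Z\rangle^{\pi}$ is the exact mirror of this: your $\{g,S\}$ is the function-side name for the commutator $[i_{m},R_{Tr(YdX)}]$, and your observations $\Delta g=0$, $\{g,g\}=0$ play the role of $R_{Tr(YdX)}\prod dX=0$. What your route buys is economy: the whole theorem collapses to one transparent Poisson-bracket computation (the linear function $Tr\langle Y,X\rangle$ generates the constant translation $Z\mapsto Z+\pi Y$, and translating the cyclic cubic yields the moment $m_{A\otimes gl_N}(Y\otimes Z\otimes Z)$, which is the paper's equation (7)), plus the already-established master equation $\Delta S+\tfrac12\{S,S\}=0$ of Proposition \ref{deltatr}. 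The price is that the Koszul signs and the $1/3!$ versus $1/2$ normalization must be tracked by hand --- the one step you flag but do not carry out --- whereas the paper's operator identities package those signs once and for all; since your target identity is literally the paper's equation (7), this bookkeeping does close, and I see no gap in either route.
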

\begin{proof} Denote by $i_{m(X\frac{\partial}{\partial X}\frac{\partial}{\partial X})}$
the operator of contraction with the bivector field $\frac{1}{2}\sum_{\alpha,\beta,\gamma}(m_{A})_{\alpha}^{\beta\gamma}Tr(X^{\alpha}\frac{\partial}{\partial X^{\beta}}\wedge\frac{\partial}{\partial X^{\gamma}})$
and by $R_{Tr(YdX)}$ the operator of exterior multiplication by the
1-form $Tr\langle Y,dX\rangle$ acting on differential forms, 
\[
R_{Tr(YdX)}=[d_{DR},i_{Tr\langle Y,X\rangle}]
\]
where $i_{Tr\langle Y,X\rangle}$ is the multiplication by the linear
function $Tr\langle Y,X\rangle$. Then 
\[
[i_{m(X\frac{\partial}{\partial X}\frac{\partial}{\partial X})},R_{Tr(YdX)}]=i_{[\cdot,Y]}
\]
This is simply a particular case of the standard relation 
\[
[i_{\gamma_{1}},Lie_{\gamma_{2}}]=i_{[\gamma_{1},\gamma_{2}]}
\]
for the action of polyvector fields. Notice that 
\[
d_{DR}e^{Tr\langle Y,X\rangle}=e^{Tr\langle Y,X\rangle}\left(d_{DR}+R_{Tr\left(YdX\right)}\right)
\]
and that 
\begin{equation}
R_{Tr\left(YdX\right)}\exp\left(i_{m(X\frac{\partial}{\partial X}\frac{\partial}{\partial X})}\right)=\exp\left(i_{m(X\frac{\partial}{\partial X}\frac{\partial}{\partial X})}\right)\left(R_{Tr\left(YdX\right)}+i_{[\cdot,Y]}\right)
\end{equation}
Since $d_{DR}\Psi(X)=0$ , and $R_{Tr\left(YdX\right)}\prod_{\alpha,i,j}dX_{i}^{\alpha,j}=0$, therefore 
\[
d_{DR}(e^{Tr\langle Y,X\rangle}\Psi(X))=i_{[\cdot,Y]}e^{Tr\langle Y,X\rangle}\Psi(X). \,\,\,  \square
\]
\end{proof}

\section{The integral.}

The closed differential form $\Psi(X)$ is integrated over the cycles,
which are standard in the theory of exponential integrals $\int_{\Gamma}\exp f$, see (\cite{AVG} and references therein): 
\begin{equation}
\Gamma\in H_{\ast}(M,\ensuremath{\operatorname*{Re}}(f)\rightarrow-\infty),\,\, M=\Pi A_{1}\otimes gl_{N}(\mathbb{C}).
\end{equation}
Here $f$ is  the first term in (\ref{sum}), which
is the restriction of the cubic polynomial $\frac{1}{3!}m_{A\otimes gl_{N}}(Z)$
on $M$.

The relative homology are the same for such $f$, $f\neq 0$,
and for $f+Tr\langle Y,X\rangle$ since linear term is dominated by the cubic term when
$\vert X\vert\rightarrow+\infty$. Choosing a real form of $A_{0}\otimes gl_{N}\left(\mathbb{C}\right)$
and taking the cycles in $H_{\ast}(M,\ensuremath{\operatorname*{Re}}(f)\rightarrow-\infty)$
invariant with respect to this Lie algebra gives natural cycles for
integration of the equivariantly closed differential form $e^{Tr\langle Y,X\rangle}\Psi(X)$
\[
\mathcal{F}(Y)=\int_{\Gamma}\exp(Tr\langle Y,X\rangle+\frac{1}{3!}m_{A\otimes gl_{N}}(X,\frac{\partial}{\partial X}))\vdash\prod_{\alpha,i,j}dX_{i}^{\alpha,j}
\]
In general the integration cycles are the elements of the equivariant
homology 
\[
H_{\ast,A_{0}\otimes gl_{N}}(M,\ensuremath{\operatorname*{Re}}(\frac{1}{3!}\sum_{\alpha,\beta,\gamma}(m_{A})_{\alpha\beta\gamma}Tr(X^{\alpha}X^{\beta}X^{\gamma}))\rightarrow-\infty)
\]
One can consider also the normalized integral 
\begin{equation}
\widehat{\mathcal{F}}(Y)=\int_{\Gamma}\exp(Tr\langle Y,X\rangle+\frac{1}{3!}m_{A\otimes gl_{N}}(X,\frac{\partial}{\partial X}))\vdash\prod_{\alpha,i,j}dX_{i}^{\alpha,j}/\mathcal{F}_{\left[2\right]}(Y)\label{intgr}
\end{equation}
where $\mathcal{F}_{\left[2\right]}(Y)$ is the corresponding Gaussian
integral of the quadratic part of $f+Tr\langle Y,X\rangle$ at a critical
point $\left(-Y\right)^{\frac{1}{2}}$.

Let the associative algebra $A_{0}$ has an \emph{anti-involution}
$a\rightarrow a^{\dag}$ 
\[
\left(ab\right)^{\dag}=b^{\dag}a^{\dag}\text{,}\left(ca\right)^{\dag}=\overline{c}a^{\dag}\text{,}\ensuremath{\operatorname*{tr}}\left(a^{\dag}\right)=\overline{\ensuremath{\operatorname*{tr}}\left(a\right)}\text{,}\left(a^{\dag}\right)^{\dag}=a
\]
The anti-involution defines the natural  cycle for the equivariant integration.  This anti-involution extends naturally to $A_{0}\otimes gl_{N}\left(\mathbb{C}\right)$. Then the Lie subalgebra of anti-hermitian elements in $A_{0}\otimes gl_{N}\left(\mathbb{C}\right)$
\[
u_{N}(A_{0})=\left\{ Y^{\dag}=-Y\mid Y\in A_{0}\otimes gl_{N}\left(\mathbb{C}\right)\right\} 
\]
is a real form of $A_{0}\otimes gl_{N}\left(\mathbb{C}\right)$.
And the space of hermitian elements in the dual space 
\begin{equation}
\Gamma=\left\{ X^{\dag}=X\mid X\in A_{0}^{\vee}\otimes gl_{N}\left(\mathbb{C}\right)\right\} \label{Gam}
\end{equation}
is invariant under the action of $u_{N}(A_{0})$. Then the ``real-slice''
$\Gamma$ is the natural choice of the cycle for the equivariant integration.

The localization formula for equivariant cohomology reduces the integral
of the equivariantly closed form $\Omega$ over $\Gamma$ to the integral
over the fixed locus $F$, 
\begin{equation}
\int_{\Gamma}\Omega=\int_{F}\frac{\Omega}{eu\left(N_{F}\right)} \label{eNf}
\end{equation}
where $eu\left(N_{F}\right)$ is the euler class of the normal bundle
of $F$ in $\Gamma$, see \cite{AB84}, \cite{BV82}. Calculating
the integral using the equivariant localization leads to generalized
Vandermond determinants and $\tau-$functions.

Let for simplicity the algebra $A$ with odd scalar product is the
tensor product $A=A_{0}\otimes q_1$ of the even associative
algebra $A_{0}$ with scalar product, denoted $\eta\left(y_{1},y_{2}\right)$,
and the algebra $q_1=\left\{ 1,\xi\mid\xi^{2}=1\right\} $ with the
odd scalar product $\left\langle 1,\xi\right\rangle =1$.

Assume that the natural scalar product on the Lie algebra of anti-hermitian
elements in $A_{0}$ is positive definite 
\[
-\eta\left(y,y\right)=\eta\left(y,y^{\dag}\right)>0.
\]
Otherwise one can apply to $\Gamma$ a partial Wick rotation.

 Then the localization formula (\ref{eNf}) after some 
calculations leads to the following result (\cite{B18}): 
\begin{proposition}
The integral (\ref{intgr}), written in variables $t\in HC^{\ast}(A)$,
is a $\tau-$function of KP-type hierarchy and, in particular, satisfies the Hirota quadratic equations. 
\end{proposition}


\begin{thebibliography}{AVG}
\bibitem[AVG]{AVG} V.Arnold, A.Varchenko, S.Gusejn-Zade, \emph{Singularities
of Differentiable Mappings.} v.1, 2. M.: Nauka, (1985)

\bibitem[AB]{AB84}M.Atiyah, R.Bott,\emph{The moment map and equivariant
cohomology}, Topology 23 (1984) no. 1, 1\textendash 28.

\bibitem[B1]{B06}S.Barannikov, \emph{Noncommutative Batalin-Vilkovisky
geometry and matrix integrals.} Preprint NI06043 (2006), Cambridge
University, preprint CNRS hal-00102085 (2006). Comptes Rendus Math\'ematique
vol 348, pp. 359-362. doi: 10.1016/j.crma.2010.02.002 

\bibitem[B2]{B10}S.Barannikov, \emph{Solving the noncommutative Batalin-Vilkovisky
equation}. Preprint CNRS hal-00464794 (2010). Letters in Mathematical
Physics (2013) Vol 103, Issue 6, pp 605-628. doi:10.1007/s11005-013-0615-8  

\bibitem[B3]{B18a}S.Barannikov, \emph{Supersymmetry and cohomology
of graph complexes. }Preprint CNRS hal-00429963 (2009), Letters in
Mathematical Physics (2018) doi:10.1007/s11005-018-1123-7

\bibitem[B4]{B18}S.Barannikov, \emph{EA-matrix integrals and cyclic
cohomology. }Preprint CNRS hal-02144148

\bibitem[B5]{B00}S.Barannikov, \emph{Quantum periods - I. Semi-infinite variations of Hodge structures.} arXiv:math/0006193 (2000), IMRN Vol. 2001, pp.1243-1264 doi:10.1155/S1073792801000599


\bibitem[DKV]{BV82}    M.Duflo, Sh.Kumar, M.Vergne, \emph{Sur la cohomologie \'equivariante des vari\'et\'es
diff\'erentiables.} Ast\'erisque 215 (1993). 

\bibitem[K]{K} M.Kontsevich \emph{Intersection theory on the moduli space of curves and the matrix Airy function.} 
Comm. Math. Phys. 147 (1992), no.1, 1-23.
\end{thebibliography}
\end{document}